\def\ignore#1{{ }}
\def\pp{{\mathscr P}}
\def\LL{{\mathscr L}}
\definecolor{mygray}{gray}{0.8}
\def\sphere{{\mathbb S}}
\def\mas#1{{{#1}^+}}
\def\menos#1{{{#1}^-}}
\def\proj{{\mathbb {R}\text{P}}^2}
\def\qq{\mathscr Q}
\def\nn{\mathscr N}
\def\aa{{\mathscr A}}
\def\mm{{\mathscr M}}
\def\nn{{\mathscr N}}
\def\cc{{\mathscr C}}
\def\xx{{\mathscr X}}
\def\myfrac#1#2{{\genfrac{}{}{0pt}{}{#1}{#2}}}
\def\cir#1#2#3{{\cc{\hbox{\hglue #1 cm}\myfrac{#2}{#3}}}}
\def\lonem{\line{-0.1}{1}{m}}
\def\lonem{{\mathscr A}_m}
\def\centerarc[#1](#2)(#3:#4:#5){ \draw[#1] ($(#2)+({#5*cos(#3)},{#5*sin(#3)})$) arc (#3:#4:#5); } 
\def\lonefive{\aa_{5}}
\def\conem{\cir{-0.1}{1}{m}}
\def\ctwom{\cir{-0.06}{2}{m}}
\def\ctwothree{\cir{-0.03}{2}{3}}
\def\cthreethree{\cir{-0.03}{3}{3}}
\def\conethree{\cir{-0.04}{1}{3}}
\def\conen{\cir{-0.06}{1}{n}}
\def\ctwon{\cir{-0.06}{2}{n}}
\def\cthreem{\cir{-0.08}{3}{m}}
\def\cthreen{\cir{-0.03}{3}{n}}
\def\conefive{\cir{-0.03}{1}{5}}
\def\ctwofive{\cir{-0.03}{2}{5}}
\def\cthreefive{\cir{-0.02}{3}{5}}
\def\c13{{\conethree}}
\def\c35{{\cthreefive}}
\newtheorem{theorem}{Theorem} 
\newtheorem{theorem*}{Theorem} 
\newtheorem{proposition}[theorem]{Proposition} 
\newtheorem{lemma}[theorem]{Lemma}
\newtheorem{observation}[theorem]{Observation}
\theoremstyle{definition}
\begin{document}



\title{\bf The unavoidable arrangements of pseudocircles}

\author{Carolina Medina}
\address{Instituto de F\'\i sica, Universidad Aut\'onoma de San Luis Potos\'{\i}, Mexico.}
\email{cmedina@ifisica.uaslp.mx}

\author{Jorge Ram\'\i rez-Alfons\'{\i}n}
\address{IMAG, Univ. Montpellier, CNRS, Montpellier, France.}
\email{jorge.ramirez-alfonsin@univ-montp2.fr}

\author{Gelasio Salazar}
\address{Instituto de F\'\i sica, Universidad Aut\'onoma de San Luis Potos\'{\i}, Mexico.} 
\email{gsalazar@ifisica.uaslp.mx}

\subjclass[2010]{Primary 52C30; Secondary 05C10, 52C40}





\begin{abstract}
It is known that cyclic arrangements are the only {\em unavoidable} simple arrangements of pseudolines:  for each fixed $m\ge 1$, every sufficiently large simple arrangement of pseudolines has a cyclic subarrangement of size $m$. In the same spirit, we show that there are three unavoidable arrangements of pseudocircles.
\end{abstract}

\maketitle

\section{Introduction}\label{sec:introduction}

A  {\em pseudoline} is a noncontractible simple closed curve in the projective plane $\proj$. An {\em arrangement of pseudolines} is a set of pseudolines that cross each other exactly once. Two arrangements of pseudolines are {\em isomorphic} if the cell complexes they induce in $\proj$ are isomorphic. An  arrangement of pseudolines is {\em simple} if no three pseudolines have a common point. A simple arrangement is {\em cyclic} if its pseudolines can be labelled $1, 2,\ldots,m$, so that each pseudoline $i\in[m]$ intersects the pseudolines in $\{1,2,\ldots,m\}{\setminus}\{i\}$ in increasing order, as in Figure~\ref{fig:fig1}. We use $\lonem$ to denote a cyclic arrangement of size $m$. 

\begin{figure}[ht!]
\centering
\scalebox{0.7}{\begin{tikzpicture}[]

\begin{scope}[shift ={(0,0)}, ]

\draw[black] (0,2.5) .. controls (0,0) and (0,0) .. (0,-2.5);
\draw[blue,rotate =-18] (0,2.5) .. controls (-0.5,1) and (-0.5,-1) .. (0,-2.5);
(0,-4);
\draw[red,rotate =-36] (0,2.5) .. controls (0,0.5) and (-0.5,-1) .. (0,-2.5);
(0,-4);
\draw[green!60!black,rotate =-54] (0,2.5) .. controls (0.5,0.5) and (0.7,0.5) .. (0,-2.5);
\draw[magenta,rotate =-72] (0,2.5) .. controls (0.4,2) and (2.2,-0.5) .. (0,-2.5);
\draw[lightgray,dashed,thick ] (0,0) circle (2.5);
  \draw (0,2.8) node {\small $1$};
  \draw[rotate=-18,ultra thick] (0,2.8) node {\small $2$};
  \draw[rotate=-36, ultra thick] (0,2.8) node {\small $3$};
  \draw[rotate=-54, ultra thick] (0,2.8) node {\small $4$};
  \draw[rotate=-72, ultra thick] (0,2.8) node {\small $5$};
  \draw [rotate=180, ultra thick](0,2.8) node {\small $1$};
  \draw[rotate=162, ultra thick] (0,2.8) node {\small $2$};
  \draw[rotate=144, ultra thick] (0,2.8) node {\small $3$};
  \draw[rotate=126, ultra thick] (0,2.8) node {\small $4$};
  \draw[rotate=108, ultra thick] (0,2.8) node {\small $5$};
 \end{scope}
\end{tikzpicture}}
\caption{{The cyclic arrangement $\lonefive$.}}
\label{fig:fig1}
\end{figure}


In the spirit of~\cite{Pach1}, the following states that cyclic arrangements are the only {\em unavoidable} arrangements of pseudolines. 

\begin{theorem}[{\cite[Proposition 3.4.7]{Negami1}}]\label{thm:proj0}
For each fixed $m\ge 1$, every sufficiently large simple arrangement of pseudolines has a subarrangement isomorphic to $\lonem$.
\end{theorem}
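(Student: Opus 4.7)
The plan is a Ramsey-theoretic argument applied to a coloring by local isomorphism type. First observe that, for any fixed $k\ge 1$, there are only finitely many isomorphism classes of simple arrangements of $k$ pseudolines in $\proj$; write $\tt_k$ for this finite set. For a suitable small constant $k$ (I expect $k=4$ or $k=5$ will suffice), color every $k$-subset $S$ of a given simple arrangement $\aa$ by the isomorphism type in $\tt_k$ of the sub-arrangement it induces.

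By Ramsey's theorem for $k$-uniform hypergraphs, provided $|\aa|\ge R_k(m;|\tt_k|)$, we obtain an $m$-element sub-arrangement $\aa'\subseteq\aa$ on which every $k$-subset induces the same type $T\in\tt_k$. The heart of the proof is to show that this forces $\aa'\cong\lonem$. This reduces to two sub-claims: a \emph{$k$-local characterization} of cyclicity, asserting that an arrangement of pseudolines is cyclic if and only if each of its $k$-pseudoline sub-arrangements is cyclic; and a \emph{persistence} statement ruling out, for every non-cyclic $T\in\tt_k$, the existence of arbitrarily large arrangements in which every $k$-subset has type $T$. Granted both, the monochromatic type $T$ must be the cyclic one, and $\aa'$ is cyclic as required.

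The hard part will be the persistence step: for each non-cyclic $T$ one must show that assuming every $k$-subset of $\aa'$ has type $T$ leads to a contradiction once $m$ is large, a delicate case analysis depending on the topology of $\proj$. A cleaner alternative that sidesteps this enumeration is an iterated Erd\H{o}s--Szekeres argument. Fix a reference pseudoline $\ell_0$ and order the other pseudolines by their intersections with $\ell_0$; then extract, one newly chosen pseudoline at a time, long sub-sequences on which the cyclic order along that pseudoline agrees with (or is the reverse of) the order along $\ell_0$. After $O(m)$ such extractions, what remains is an $m$-subarrangement in which the other pseudolines appear in the same cyclic order along \emph{every} chosen pseudoline, which is precisely the defining property of $\lonem$.
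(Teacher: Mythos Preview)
The paper does not prove this theorem; it is quoted from \cite{Negami1}. The only hint at a proof is in the proof of Lemma~\ref{lem:krupp}, where \cite[Proposition~1.4]{Alfonsin1} is invoked with the explicit bound $r_3(m,m)$. That bound signals the intended argument: a $2$-colouring of \emph{triples}, not a $|\tt_k|$-colouring of $k$-subsets with $k\ge 4$. One first imposes a linear order on the pseudolines (via their intersections with a reference pseudoline, or equivalently by passing to a wiring diagram), and then colours each ordered triple $i<j<k$ by the orientation of the triangle it bounds. A monochromatic $m$-set is cyclic essentially by inspection; this is the projective dual of the cups-and-caps proof of the Erd\H{o}s--Szekeres theorem.

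Your Approach~1 has the right Ramsey shape but is aimed at the wrong invariant. Colouring by unlabelled \emph{isomorphism type} discards exactly the ordering information that makes the argument work: for $k=3$ there is a single isomorphism type, so the colouring is vacuous; for $k\ge 4$ you are left with the two sub-claims you yourself flag as unproved. The persistence step in particular is not a routine finite check---you would need, for each non-cyclic type $T$, a structural obstruction to arbitrarily large $T$-homogeneous arrangements, and you give no mechanism for producing one. The fix is not to raise $k$ but to refine the colour: order the lines first and $2$-colour ordered triples.

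Your Approach~2 can be made to work but is also incomplete as written. Forcing the order along each newly chosen pseudoline to agree (or reverse-agree) with the order along $\ell_0$ does not yet yield the defining property of $\lonem$: you must also argue that the resulting orders along all retained pseudolines are mutually consistent, and explain how the monotone/antimonotone choice at each step is resolved coherently across the whole process. Even once patched, $O(m)$ iterations of Erd\H{o}s--Szekeres cost on the order of $m^{2^{m}}$, much worse than the single $r_3(m,m)$ the standard argument needs.
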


We prove an analogue of Theorem~\ref{thm:proj0} for arrangements of pseudocircles. A {\em pseudocircle} is a simple closed curve in the sphere $\sphere^2$. We use Gr\"unbaum's original notion that an {\em arrangement of pseudocircles} is a set of pseudocircles that pairwise intersect at exactly two points, at which they cross, and no three pseudocircles have a common point~\cite{grunbaum}. This notion is still adopted nowadays~\cite{ortner2}, and some more general notions are also used in the literature~\cite{Felsner1,kang}. 


In Figure~\ref{fig:fig2} we illustrate arrangements $\conefive,\ctwofive$, $\cthreefive$, and it is clear how to generalize them to $\conem,\ctwom$, and $\cthreem$, for any $m\ge 1$. These are the unavoidable arrangements of pseudocircles.

\begin{theorem}\label{thm:main}
For each fixed $m\ge 1$, every sufficiently large arrangement of pseudocircles has a subarrangement isomorphic to $\conem,\ctwom$, or $\cthreem$.
\end{theorem}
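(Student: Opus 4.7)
The plan is to reduce Theorem~\ref{thm:main} to Theorem~\ref{thm:proj0} via a Ramsey-type argument. Given a large arrangement $\ff$, I would fix a pseudocircle $C_0\in\ff$ and cut $\sphere^2$ along $C_0$ into two closed disks $D^+$ and $D^-$. Each other pseudocircle $C\in\ff\setminus\{C_0\}$ is split by $C_0$ into two arcs, one in $D^+$ and one in $D^-$, sharing the same pair of endpoints on $C_0$. For every pair $\{C,C'\}\subseteq\ff\setminus\{C_0\}$, the two crossings of $C$ and $C'$ lie either (a) both in $D^+$, (b) both in $D^-$, or (c) one in each. Applying $3$-color Ramsey to the pairs yields an arbitrarily large subfamily $\ff_1\subseteq\ff\setminus\{C_0\}$ in which every pair has the same type.

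Case (c) reduces cleanly to the pseudoline result. The arcs $\{C\cap D^+: C\in\ff_1\}$ are simple arcs in $D^+$, pairwise crossing exactly once, with endpoints on $\partial D^+ = C_0$. Closing each arc off by a companion arc running close to $C_0$ in $D^-$ produces a simple arrangement of pseudolines in $\proj$, to which Theorem~\ref{thm:proj0} applies. The resulting cyclic subarrangement pulls back to a subfamily that, together with $C_0$, is isomorphic to one of $\conem,\ctwom,\cthreem$ (the one matching the ``crossings alternate across $C_0$'' pattern).

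Cases (a) and (b) are symmetric; suppose all pairwise crossings lie in $D^+$. Then the arcs on the $D^-$ side are pairwise disjoint simple arcs in a disk with endpoints on its boundary, hence form a laminar family. A tree-chain / Erd\H{o}s--Szekeres argument extracts a large subfamily whose $D^-$-arcs are either all pairwise nested or all pairwise non-nested. On the $D^+$ side the arcs pairwise cross twice, and a further Ramsey pass on the cyclic order of the four endpoints on $C_0$ together with the order of the two crossings normalizes the pattern of each pair. Putting these together should force the refined subfamily to be isomorphic to one of the remaining two target configurations, one in each sub-case.

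The main obstacle I expect is the same-side case. The clean reduction to pseudolines used in case (c) fails there, and one must show that, after the laminar extraction and the cyclic-order refinement, the only globally consistent patterns are precisely the two targets among $\conem,\ctwom,\cthreem$, with no exotic hybrids arising. Making this precise is where the bulk of the argument will lie: it requires iterating the Ramsey extractions several times, and carrying out a case analysis of the possible cyclic orders of four endpoints on $C_0$ to rule out configurations that are locally consistent but globally forbidden.
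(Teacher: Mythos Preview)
Your overall plan---fix a pseudocircle $C_0$, split the sphere, and Ramsey on how pairs distribute their two crossings---is a reasonable opening, and your cases (a)/(b) loosely parallel the paper's ``rainbow'' reduction on the NonKrupp side. The paper, however, takes a different first step: it applies Ramsey to \emph{triples}, colouring each triple by whether it is Krupp or NonKrupp, and thereby reduces to Krupp-packed versus NonKrupp-packed arrangements. In the Krupp-packed case the reduction to pseudolines is then genuinely clean, because a Krupp-packed arrangement literally \emph{is} an arrangement of great pseudocircles, obtained by gluing two copies of one pseudoline arrangement; Theorem~\ref{thm:proj0} then yields $\conem$ directly (Lemma~\ref{lem:krupp}). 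Your case (c) is not the same situation, and that is where the real gap lies.

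The ``clean reduction'' you claim in case (c) does not go through. Even granting that the $D^+$-arcs, pairwise crossing once, can be organised into a pseudoline arrangement to which Theorem~\ref{thm:proj0} applies (your closing-off construction needs care: closed curves in $\sphere^2$ meet an even number of times, so you must pass to $\proj$ more carefully), extracting a cyclic subfamily on the $D^+$ side says nothing about the $D^-$ side. In case (c) the $D^-$-arcs \emph{also} pairwise cross exactly once, but they form an independent once-crossing arc system with the same boundary data on $C_0$; for $m\ge 4$ there are many such, and the full pseudocircle subarrangement you pull back need not be isomorphic to $\conem$, $\ctwom$, or $\cthreem$. Put differently, being monochromatic of type (c) only forces the triples \emph{containing} $C_0$ to be Krupp; triples lying entirely inside $\ff_1$ can still be NonKrupp, so you are nowhere near the great-pseudocircle situation. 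At minimum you would need a second Ramsey pass on the $D^-$ side and then a reconciliation of the two cyclic orders, or else a switch to a triple-colouring as the paper does; either way, case (c) is not the easy case you take it to be. Your sketch for cases (a)/(b) is in the right spirit---it echoes Propositions~\ref{pro:claima} and~\ref{pro:claimb}---but, as you acknowledge, the substantive work is all still to be done there.
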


\begin{figure}[ht!]
\centering
\scalebox{0.7}{\begin{tikzpicture}[]
\begin{scope}[shift ={(0,0)}, scale=1]
\draw[black] (0,1.) circle (1.5);
\draw[magenta,rotate =72] (0,1.) circle (1.5);
\draw[green!60!black,rotate =144] (0,1.) circle (1.5);
\draw[blue,rotate =-72] (0,1.) circle (1.5);
\draw[red,rotate =-144] (0,1.) circle (1.5);
\end{scope}
\begin{scope}[shift ={(7,0)},scale=1]
\draw[black] (-1.5,0) circle (2);
\draw[blue] (-0.75,0) circle (2);
\draw[red,] (0,0) circle (2);
\draw[green!60!black] (0.75,0) circle (2);
\draw[magenta] (1.5,0) circle (2);
\end{scope}
\begin{scope}[shift ={(17.2,-2.2)},scale=0.7]
\draw[magenta, rounded corners=2] (-4,0) -- (5,0)--(5,5.5)--(3.5,5.5)--(3.5,4)--(-8,4)--(-8,0)--(-4,0);
\draw[green!60!black, rounded corners=2] (-4,0.3) -- (3,0.3)--(3,5.5)--(1.5,5.5)--(1.5,3.7)--(-7.7,3.7)--(-7.7,0.3)--(-4,0.3);
\draw[red, rounded corners=2] (-4,0.6) -- (1,0.6)--(1,5.5)--(-0.5,5.5)--(-0.5,3.4)--(-7.4,3.4)--(-7.4,0.6)--(-4,0.6);
\draw[blue, rounded corners=2] (-4,0.9) -- (-1,0.9)--(-1,5.5)--(-2.5,5.5)--(-2.5,3.1)--(-7.1,3.1)--(-7.1,0.9)--(-4,0.9);
\draw[black, rounded corners=2] (-4,1.2) -- (-3,1.2)--(-3,5.5)--(-4.5,5.5)--(-4.5,2.8)--(-6.8,2.8)--(-6.8,1.2)--(-4,1.2);
  \draw (-24.5,7.2) node {\small $1$};
  \draw (-21,4.7) node {\small $2$};
  \draw (-21.6,0.8) node {\small $3$};
  \draw (-27.7,0.8) node {\small $4$};
  \draw (-28.2,4.6) node {\small $5$};

  \draw (-20,3) node {\small $1$};
  \draw (-18.8,3) node {\small $2$};
  \draw (-17.7,3) node {\small $3$};
  \draw (-16.6,3) node {\small $4$};
  \draw (-15.5,3) node {\small $5$};

  \draw (-3.3,2) node {\small $1$};
  \draw (-1.3,2) node {\small $2$};
  \draw (0.7,2) node {\small $3$};
  \draw (2.7,2) node {\small $4$};
  \draw (4.7,2) node {\small $5$};

\end{scope}

\end{tikzpicture}} 
\caption{The arrangements $\conefive$ (left), $\ctwofive$ (center), and $\cthreefive$ (right).}
\label{fig:fig2}
\end{figure}



We note that Theorem~\ref{thm:main} is best possible, as no pseudocircle can be added to the collection $\{\conem,\ctwom,\cthreem\}$. This follows since for all integers $m,n$ with $m\le n$, all subarrangements of $\conen$ (respectively, $\ctwon,\cthreen$) of size $m$ are isomorphic to $\conem$ (respectively, $\ctwom,\cthreem$). Thus it is accurate to say that these are {\em the} unavoidable arrangements of pseudocircles.

For the rest of the paper, for brevity we refer to an arrangement of pseudocircles simply as an {\em arrangement}. 

\section{Reducing Theorem~\ref{thm:main} to two kinds of arrangements}

There are, up to isomorphism, only two arrangements of size $3$. Following~\cite{F2}, these are the {\em Krupp arrangement} and the {\em NonKrupp arrangement}. We refer the reader to Figure~\ref{fig:fig6}. Note that the Krupp arrangement is isomorphic to $\conethree$, and the NonKrupp arrangement is isomorphic to $\ctwothree$ and $\cthreethree$. If an arrangement $\pp$ of size at least $3$ has all its $3$-subarrangements isomorphic to the Krupp arrangement (respectively, to the NonKrupp arrangement), then we say that $\pp$ is {\em Krupp-packed} (respectively, {\em NonKrupp-packed}).

\begin{figure}[ht!]
\centering
\scalebox{0.8}{\input{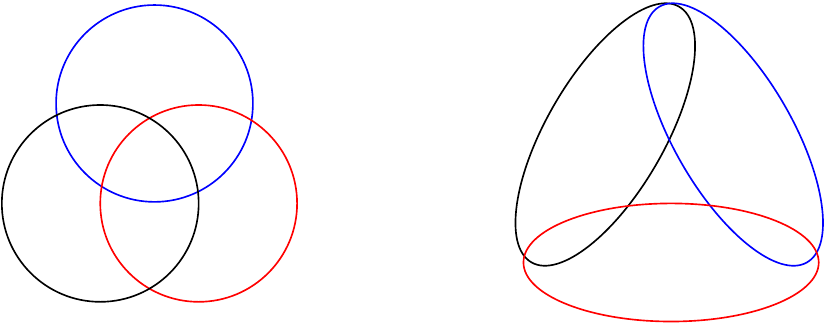_t}}
\caption{{The Krupp arrangement (left) and the NonKrupp arrangement (right).}}
\label{fig:fig6}
\end{figure}

We now state Theorem~\ref{thm:main} for Krupp-packed and for NonKrupp-packed arrangements. As we shall see shortly, the general version of Theorem~\ref{thm:main} easily follows as a consequence.

\begin{lemma}\label{lem:krupp}
Theorem~\ref{thm:main} holds for Krupp-packed arrangements.
\end{lemma}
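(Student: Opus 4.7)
The first step is a reduction. Since $\pp$ is Krupp-packed, every $3$-subarrangement of $\pp$ is isomorphic to the Krupp arrangement, i.e.\ to $\conethree$. On the other hand, as noted right after Theorem~\ref{thm:main}, every $3$-subarrangement of $\ctwom$ (resp.\ of $\cthreem$) is isomorphic to $\ctwothree$ (resp.\ $\cthreethree$), both of which are NonKrupp. Hence $\pp$ cannot contain a subarrangement isomorphic to $\ctwom$ or $\cthreem$ for any $m\ge 3$, and so Lemma~\ref{lem:krupp} reduces to the statement that any sufficiently large Krupp-packed arrangement contains a subarrangement isomorphic to $\conem$.

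Next I would analyze the combinatorics of $\pp$ relative to a single pseudocircle. Fix $C_0\in\pp$ and label the rest $C_1,\ldots,C_{n-1}$, writing $C_i\cap C_0=\{a_i,b_i\}$. The Krupp condition on $(C_0,C_i,C_j)$ forces the pairs $\{a_i,b_i\}$ and $\{a_j,b_j\}$ to interleave along $C_0$, so the $n-1$ chords are pairwise linking. A pairwise linking family of chords on a circle is unique up to relabeling, which forces the cyclic order of endpoints along $C_0$ to be $a_1,a_2,\ldots,a_{n-1},b_1,b_2,\ldots,b_{n-1}$. This is precisely the trace left on any member of a $\conen$-arrangement, so the $1$-dimensional combinatorics on $C_0$ already matches $\conem$.

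To promote this to a genuine $\conem$-subarrangement, consider the two closed disks $D^+,D^-$ bounded by $C_0$. Inside $D^+$, each $C_i$ restricts to an arc $\alpha_i^+$ from $a_i$ to $b_i$, and the Krupp condition forces the family $\alpha_1^+,\ldots,\alpha_{n-1}^+$ to be pairwise crossing; the analogous statement holds in $D^-$. Each such arc system is essentially an arrangement of pseudolines in a disk. For every triple $i<j<k$, the Krupp condition on $(C_i,C_j,C_k)$ constrains the order in which the arcs $\alpha_i^\pm,\alpha_j^\pm,\alpha_k^\pm$ cross each other inside $D^\pm$. Encoding that local order as a color of the triple gives a coloring of $\binom{[n-1]}{3}$ with bounded palette, and Ramsey's theorem then produces, once $n$ is large enough in terms of $m$, a monochromatic $m$-subset whose associated pseudocircles realize the specific crossing-order pattern of $\conem$ in both $D^+$ and $D^-$.

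The main obstacle I expect is pinning down which local crossing-order on triples is the ``correct'' one, i.e.\ giving the precise combinatorial characterization of the pencil-like arc arrangement of $\conem$ inside a disk, and then verifying that a monochromatic class of the Ramsey argument yields simultaneously the right pattern in both $D^+$ and $D^-$. Once that characterization is in hand, the Ramsey step is routine; the real content lies in showing that ``Krupp on every triple'' rigidifies the arc arrangements enough that a single bounded-range coloring of triples suffices to pick out a $\conem$-subarrangement.
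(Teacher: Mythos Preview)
Your setup is correct: the Krupp condition on $(C_0,C_i,C_j)$ does force the endpoints to interleave along $C_0$ and the arcs $\alpha_i^+$ to pairwise cross exactly once in $D^+$, so you do obtain a genuine simple pseudoline arrangement in each hemisphere. But the paper's proof is much shorter, because it invokes a known structural fact that you are essentially trying to rediscover from scratch: a Krupp-packed arrangement is precisely an \emph{arrangement of great pseudocircles}, and every such arrangement arises from a single simple pseudoline arrangement $\LL$ by gluing two copies of a wiring diagram of $\LL$ (see \cite[Section~3.2]{Felsner1} and \cite[Section~6.1.4]{richterziegler}). In your language, this says that the pseudoline arrangements you see in $D^+$ and in $D^-$ are mirror images of one another; this is exactly what the Krupp condition on triples $(C_i,C_j,C_k)$ not involving $C_0$ encodes. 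Once that is known, no new Ramsey argument on triples is needed: one simply applies Theorem~\ref{thm:proj0} (unavoidability of cyclic subarrangements for pseudolines) to $\LL$, obtains a cyclic $\aa_m$, and the corresponding $m$ pseudocircles automatically form $\conem$.

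So the ``main obstacle'' you flag is exactly the point at which the paper cites a known result. Your plan---colour triples by local crossing order in $D^+$ and $D^-$ and apply Ramsey---would, if completed, amount to re-proving Theorem~\ref{thm:proj0} inside $D^+$ and then separately verifying that the $D^-$ pattern is forced to match; both steps are genuine work, and you have not carried out either. There is no outright error in your outline, but as written it is a sketch with an acknowledged gap, whereas the paper's argument is two lines once the great-pseudocircle characterization is quoted.
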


\begin{lemma}\label{lem:nonkrupp}
Theorem~\ref{thm:main} holds for NonKrupp-packed arrangements.
\end{lemma}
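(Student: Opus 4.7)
The plan is to extract a highly structured subarrangement via Ramsey-type reductions, starting from an analysis of how the pseudocircles cross a single reference pseudocircle.

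Fix a large NonKrupp-packed arrangement $\pp$ and an arbitrary $C_0 \in \pp$. Each other pseudocircle $C \in \pp \setminus \{C_0\}$ crosses $C_0$ at exactly two points, giving an unordered pair $\pi(C) \subset C_0$. The first observation I would establish is that the NonKrupp condition on $\{C_0, C, C'\}$ is equivalent to $\pi(C)$ and $\pi(C')$ being \emph{non-interleaved} along $C_0$, i.e., either nested or disjoint as arcs; the Krupp configuration is precisely the interleaved one. Consequently, the family $\{\pi(C) : C \in \pp \setminus \{C_0\}\}$ is a laminar family of $2$-subsets of $C_0$. A Dilworth-style argument on this family (ordered by containment of the shorter arc, say) then yields a large sub-collection $\qq \subseteq \pp \setminus \{C_0\}$ in which either (i) all $\pi(C)$ are pairwise nested, or (ii) all $\pi(C)$ are pairwise disjoint.

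Next I would analyse each case by reading off the global structure. In the nested case, after orienting $C_0$ and choosing, for each $C \in \qq$, the arc of $C$ on a fixed side of $C_0$, the nested-arc data linearly orders the pseudocircles in $\qq$; checking that the complementary arcs (on the other side of $C_0$) respect the same order — which follows from applying the same arc analysis with each $C \in \qq$ playing the role of $C_0$ — should identify $\{C_0\} \cup \qq$ as isomorphic to $\ctwom$ (after a further Ramsey pass to make orientations consistent). In the disjoint case, the arcs $\pi(C)$ are cyclically ordered disjoint arcs on $C_0$; here I expect a second reduction, either via Erd\H{o}s--Szekeres on the cyclic ordering or a further laminarity analysis on a second reference circle, to produce a subarrangement isomorphic to $\cthreem$. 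Note that $\conem$ cannot arise, since $\conem$ is Krupp-packed and hence incompatible with the NonKrupp-packed hypothesis.

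The main obstacle will be converting the local ``nested versus disjoint'' dichotomy on $C_0$ into a genuine global isomorphism with $\ctwom$ or $\cthreem$. In particular, knowing the intersection pattern on $C_0$ does not a priori fix the pattern on $C_i \cap C_j$ for $i,j \neq 0$, so I expect one or two additional Ramsey reductions — applied to pairs or triples from $\qq$, and exploiting the NonKrupp condition on the triple $\{C_0, C_i, C_j\}$ together with the NonKrupp condition on $\{C_i, C_j, C_k\}$ — to force a consistent side-assignment and to rule out the remaining mixed configurations. Once this consistency is secured, verifying the isomorphism with the target model arrangement should reduce to a routine check of the cell complex induced in $\sphere^2$.
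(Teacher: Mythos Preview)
Your opening observation is correct and matches the paper's starting point: in a NonKrupp-packed arrangement the crossing pairs $\pi(C)$ on any reference circle $C_0$ are pairwise non-interleaved, hence laminar, and a Dilworth (or Ramsey) pass yields a large nested or a large disjoint subfamily. The gap is the identification ``nested $\Rightarrow \ctwom$, disjoint $\Rightarrow \cthreem$''. In fact \emph{both} $\ctwom$ and $\cthreem$ exhibit the fully nested pattern on a suitable reference circle: pseudocircle~$1$ has unsigned code $2\,3\cdots m\,m\cdots 3\,2$ in each of them (compute from Observations~\ref{obs:obsctwo} and~\ref{obs:obscthree}). So landing in the nested case on $C_0$ does not tell you which target you are heading towards; the distinction lives entirely in how the $C_i$ intersect one another away from $C_0$. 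Your proposed check that ``the complementary arcs on the other side of $C_0$ respect the same order'' does not help, since that order is already forced by the shared endpoints on $C_0$ and carries no new information. What is actually needed --- and what the paper does in Propositions~\ref{pro:claima} and~\ref{pro:claimb} --- is first a Ramsey pass on pairs to push all mutual intersections of the $C_i$ onto a single side of $C_0$ (the ``rainbow'' condition, which your sketch omits), and then a three-colour Ramsey on triples $j<k<\ell$, classifying how $C_j$ and $C_\ell$ cross relative to $C_k$, to separate $\ctwom$ from $\cthreem$. Your outline contains neither step.

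The disjoint branch is also not as you describe. The paper's route is to \emph{eliminate} the globally-disjoint pattern via Observation~\ref{obs:nox4packed} (no ``bad'' NonKrupp-packed arrangement of size $\ge 5$ exists) rather than to extract $\cthreem$ from it. Your weaker hypothesis --- disjoint arcs on a single fixed $C_0$ --- is not itself impossible (it genuinely occurs, with $C_0$ the last circle of $\cthreem$), but you give no argument reducing it to a target arrangement; passing to a second reference circle just throws you back into the same nested/disjoint dichotomy with no evident progress. In short, the skeleton is sound but the case analysis misfires: the substantive content of the lemma is the triple-colouring that separates $\ctwom$ from $\cthreem$ inside the nested (rainbow) case, and that is missing from your plan.
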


We use Ramsey theory in our arguments. We recall that the {\em order} of a hypergraph is its number of vertices, and $r_k(\ell_1,\ell_2,\ldots,\ell_n)$ denotes the Ramsey number for complete $k$-uniform hypergraphs. That is, if each $k$-edge of a complete $k$-uniform hypergraph of order at least $r_k(\ell_1,\ell_2,\ldots,\ell_n)$ has colour $i$ for some $i\in[n]$, then there is an $i\in[n]$ and a complete subhypergraph of order $\ell_i$, all of whose $k$-edges have colour $i$. 

\begin{proof}[Proof of Theorem~\ref{thm:main}, assuming Lemmas~\ref{lem:krupp} and~\ref{lem:nonkrupp}]
Let $m\ge 1$ be a fixed integer. Assuming Lemmas~\ref{lem:krupp} and~\ref{lem:nonkrupp}, it follows that there is an integer $p$ such that every Krupp-packed or NonKrupp-packed arrangement of size at least $p$ has a subarrangement isomorphic to $\conem,\ctwom$, or $\cthreem$. Let $\qq$ be an arrangement of size $q=r_3(p,p)$. Regard $\qq$ as a complete $3${-}uniform hypergraph, and colour a $3${-}edge blue (respectively, red) if the pseudocircles in the $3$-edge form an arrangement isomorphic to the Krupp arrangement (respectively, to the NonKrupp arrangement). 

Since $q=r_3(p,p)$ it follows from Ramsey's theorem that $\qq$ has a subarrangement $\pp$ of size $p$ that is either Krupp-packed or NonKrupp-packed. The assumption on $p$ implies that $\pp$, and hence $\qq$, contains a subarrangement isomorphic to $\conem,\ctwom$, or $\cthreem$.
\end{proof}

We finish this section by proving Lemma~\ref{lem:krupp}. The rest of the paper is devoted to the proof of Lemma~\ref{lem:nonkrupp}.

\begin{proof}[Proof of Lemma~\ref{lem:krupp}]
The key fact we use here is the following. Every Krupp{-}packed arrangement $\pp$ (known in the literature as  an {\em arrangement of great pseudocircles}) can be obtained from a simple arrangement $\LL$ of pseudolines by suitably gluing together two wiring diagrams of $\LL$, as illustrated in Figure~\ref{fig:fig4} (see~\cite[Section 3.2]{Felsner1} and~\cite[Section 6.1.4]{richterziegler}). For a discussion on the wiring diagram representation of a pseudoline arrangement we refer the reader to~\cite{f0}. 

If $\LL$ is a cyclic arrangement $\aa_m$, it is readily seen that $\pp$ is isomorphic to $\conem$.

\begin{figure}[ht!]
\centering
\hglue -0.4 cm\scalebox{0.42}{\begin{tikzpicture}[]
%
\begin{scope}[shift ={(0,0)}]
\draw[black] (0,1)..  controls (0.25,1)  and (3.75,-3) .. (4,-3)--(7,-3);
\draw[blue] (0,0)..  controls (0.25,0) and (0.75,1) .. (1,1)-- (2,1)..  controls (2.25,1) and (4.75,-2) .. (5,-2)--(7,-2);
\draw[red] (0,-1)-- (1,-1)..  controls (1.5,-1) and (2.5,1) .. (3,1)--(4,1)..  controls (4.5,1)  and (5.5,-1) .. (6,-1)--(7,-1);
\draw[green!60!black] (0,-2)--(2,-2)..  controls (2.5,-2) and (4.5,1) .. (5,1)-- (6,1)..  controls (6.5,1) and (6.5,0) .. (7,0);
\draw[magenta](0,-3)-- (3,-3)..  controls (3.5,-3) and (6.5,1) .. (7,1);
\end{scope}
\begin{scope}[shift ={(7,-2)},rotate=0]
\draw[black] (0,-1)..  controls (0.25,-1)  and (3.75,3) .. (4,3)--(7,3);
\draw[blue] (0,0)..  controls (0.25,0) and (0.75,-1) .. (1,-1)-- (2,-1)..  controls (2.25,-1) and (4.75,2) .. (5,2)--(7,2);
\draw[red] (0,1)-- (1,1)..  controls (1.5,1) and (2.5,-1) .. (3,-1)--(4,-1)..  controls (4.5,-1)  and (5.5,1) .. (6,1)--(7,1);
\draw[green!60!black] (0,2)--(2,2)..  controls (2.5,2) and (4.5,-1) .. (5,-1)-- (6,-1)..  controls (6.5,-1) and (6.5,0) .. (7,0);
\draw[magenta](0,3)-- (3,3)..  controls (3.5,3) and (6.5,-1) .. (7,-1);
\draw[dashed,gray] (-15,-2) rectangle (0.15,7.0);
\draw[dashed,gray] (0.15,-2) rectangle (15.3,7.0);
\end{scope}

\begin{scope}

\draw[black] (6,2) .. controls (-4,2) and (-1,1).. (0,1);
\draw[black] (6,2) .. controls (18,2) and (15,1).. (14,1);
\draw[blue] (6,2.8) .. controls (21,2.8) and (16,0).. (14,0);
\draw[blue] (6,2.8) .. controls (-6,2.8) and (-2,0).. (0,0);
\draw[red] (6,3.3) .. controls (-9,3.3) and (-3,-1).. (0,-1);
\draw[red] (6,3.3) .. controls (24,3.3) and (17,-1).. (14,-1);
\draw[green!60!black] (6,3.7) .. controls (-11,3.7) and (-4,-2).. (0,-2);
\draw[green!60!black] (6,3.7) .. controls (27,3.7) and (18,-2).. (14,-2);
\draw[magenta] (6,4) .. controls (-14,4) and (-5,-3).. (0,-3);
\draw[magenta] (6,4) .. controls (31,4) and (19,-3).. (14,-3);
\end{scope}

\end{tikzpicture}}
\caption{{Obtaining $\conefive$ by suitably gluing together two wiring diagrams of ${\mathscr A}_5$.}}
\label{fig:fig4}
\end{figure}

Let $m\ge 1$ be a fixed integer. Let $\pp$ be a Krupp-packed arrangement of size $p:=r_3(m,m)$. Let $\LL$ be a simple arrangement of pseudolines that induces $\pp$, in the sense of the previous paragraph. Since $p=r_3(m,m)$, then by~\cite[Proposition 1.4]{Alfonsin1} $\LL$ has a cyclic arrangement $\lonem$ as a subarrangement. The subarrangement of $\pp$ induced by the pseudolines in $\lonem$ is obtained by suitably gluing together two copies of $\lonem$, and so it is isomorphic to $\conem$.
\end{proof}

\section{Intersection codes}\label{sec:codes}

In the proof of Lemma~\ref{lem:nonkrupp} we use intersection codes, as developed in~\cite{linor1,ortner2}. {This framework, in which one naturally encodes combinatorially essential information of an arrangement, can be seen as a generalization of the axiomatization of oriented matroids based on hyperline sequences~\cite{bokowski1}, and its essence goes back to the work of Gauss on planar curves in the 1830s.} 


Let $\pp=\{1,\ldots,n\}$ be an arrangement. For each $i\in\pp$ choose a point $p_i$ not contained in any other pseudocircle, and also choose one of the two possible orientations for $i$, so that for each pseudocircle we can naturally speak of a left side and a right side. Suppose that as we traverse $i$ starting at $p_i$ following the chosen orientation, we intersect $j$ from the left (respectively, right) side of $j$. We record this by writing $j^+$ (respectively, $j^-$). By keeping track of the order in which the intersections occur, we obtain the {\em code} of $i$ in $\pp$. Thus the code of each $i$ is a permutation of $\bigcup_{j\in[n]\setminus\{i\}}\{j^+,j^-\}$. If we omit the superscripts $+$ and $-$, we obtain the {\em unsigned} code of $i$. 

For instance, suppose that $\pp$ is $\ctwofive$ in Figure~\ref{fig:fig2}. Choose the counterclockwise orientation for each pseudocircle. If we choose as starting point for $3$ its leftmost point, then its code is $\mas{1}\mas{2}\menos{4}\menos{5}\mas{5}\mas{4}\menos{2}\menos{1}$, and its unsigned code is $12455421$.


We make essential use of the following.

\begin{proposition}[{\cite[Section 3]{linor1},\cite[Section 2]{ortner2}}]\label{pro:ortner}
Let $\pp,\qq$ be arrangements, both of which have their pseudocircles labelled $1,2,\ldots,n$, where an orientation and an initial traversal point has been chosen for each pseudocircle in $\pp$ and each pseudocircle in $\qq$. Suppose that for each $i\in[n]$, the code of $i$ in $\pp$ is the same as the code of $i$ in $\qq$. Then $\pp$ and $\qq$ are isomorphic.
\end{proposition}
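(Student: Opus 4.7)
The plan is to use the matching codes to build a cellular isomorphism between the CW decompositions of $\sphere^2$ induced by $\pp$ and by $\qq$, and then lift it to a homeomorphism of $\sphere^2$ mapping $\pp$ to $\qq$ label-preservingly. The construction proceeds cell-by-cell: vertices first, then edges, then faces.

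First, I would use the codes to set up a canonical bijection $\phi_V$ between the vertex sets. Each vertex of $\pp$ is a crossing of two pseudocircles $i,j\in[n]$, and (since every pair crosses exactly twice) the symbol $j$ appears exactly twice in the unsigned code of $i$. Travelling along $i$ from its chosen initial point in the chosen orientation, call these the first and second $(i,j)$-crossings of $\pp$; do the same in $\qq$, and let $\phi_V$ match them accordingly. Because the codes of $i$ agree in $\pp$ and $\qq$, this is well-defined; and because the codes of $j$ also agree, the construction is symmetric in $i$ and $j$.

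Next, I would extend $\phi_V$ to a bijection $\phi_E$ of edges. Every edge of $\pp$ is an arc on some pseudocircle $i$ delimited by two consecutive entries in the code of $i$; matching codes therefore produce a canonical matching of edges. At each vertex $v=i\cap j$ there are four arcs, and the cyclic order of these four arcs in the spherical embedding is determined by the signs $+/-$ appearing at $v$ in the codes of $i$ and $j$: the sign $j^{\pm}$ in the code of $i$ says on which side of $j$ the curve $i$ enters and leaves $v$, and the symmetric datum from the code of $j$ completes the picture. Thus the rotation systems of the 1-skeletons of $\pp$ and $\qq$ coincide.

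Finally, with matching vertex sets, edges, and rotation systems, the 2-cells (faces) of the two spherical embeddings are in canonical bijection via face-tracing from the rotation system, and the resulting cellular isomorphism extends to a homeomorphism of $\sphere^2$: one extends first over 0-cells, then parametrizes each edge, then applies the Schoenflies theorem face-by-face. The resulting homeomorphism sends $\pp$ to $\qq$ pseudocircle-by-pseudocircle, giving the desired arrangement isomorphism. The main technical obstacle is the careful sign bookkeeping in the second step: one must fix a convention that unambiguously translates the pair of signed symbols at a crossing into one of the two possible local rotations at that vertex, and check compatibility with the orientation of $\sphere^2$. Once this convention is pinned down, the rest is routine planar-topology.
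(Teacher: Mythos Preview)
The paper does not supply a proof of this proposition: it is quoted as a known result from Linhart--Ortner and Ortner, and used as a black box. So there is nothing in the paper to compare your argument against.

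That said, your outline is the natural way to establish such a statement and is essentially sound. One point deserves more care than you give it. In your first step you match the $t$-th $(i,j)$-crossing along $i$ in $\pp$ with the $t$-th one in $\qq$; for $\phi_V$ to be a bijection on \emph{vertices} (rather than on occurrences of symbols in codes) you must know that ``first $(i,j)$-crossing along $i$'' corresponds to the \emph{same} occurrence of $i$ in $j$'s code in both $\pp$ and $\qq$. This correlation is not visible from the unsigned codes; it is the signs that pin it down (on an oriented sphere, the pair of signs at a crossing of two oriented curves is determined by the local intersection number, and the two crossings of $i$ and $j$ carry opposite signs). So the sign bookkeeping you flag as the crux of step two is already needed in step one. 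Once that is handled, the passage from matching signed codes to matching rotation systems, and thence via face-tracing and Schoenflies to a homeomorphism, is indeed routine.
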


We now introduce some notation to describe the codes of $\ctwom$ and $\cthreem$ in a compact manner. Let $i,m$ be integers such that $1\le i \le m$. We use $[\mas{1}{:}\mas{i})$ to denote the string $\mas{1}\mas{2}\cdots \mas{(i{{-}}1)}$. In a similar spirit, we use $(\menos{i}{:}\menos{1}]$ to denote $\menos{(i-1)}\menos{(i-2)} \cdots \menos{1}$; we use $(\menos{i}{:}\menos{m}]$ to denote $\menos{(i+1)}\menos{(i+2)}\cdots \menos{m}$; and we use $[\mas{m}{:}\mas{i})$ to denote $\mas{m}\cdots \mas{(i+2)}\mas{(i+1)}$. Finally, we use $[\mas{1}\menos{1}{:}\mas{i}\menos{i})$ to denote $\mas{1}\menos{1}\mas{2}\menos{2}\cdots \mas{(i{-}1)} \menos{(i{-}1)}$. Note that $[\mas{1}{:}\mas{1}), (\menos{1}{:}\menos{1}]$, and $[\mas{1}\menos{1}{:}\mas{1}\menos{1})$ are empty strings, and $(\menos{m}{:}\menos{m}]$ and $[\mas{m}{:}\mas{m})$ are also empty strings. With this notation, we have the following.

\begin{observation}\label{obs:obsctwo}
Label $\ctwom$ with the natural extension of the labelling of $\ctwofive$ in Figure~\ref{fig:fig2}. Orient all pseudocircles counterclockwise, and for each pseudocircle choose as initial traversal point its leftmost point. Then the code of each $i$ in $\ctwom$ is $ [\mas{1}{:}\mas{i})  (\menos{i}{:}\menos{m}] [\mas{m}{:}\mas{i})  (\menos{i}{:}\menos{1}]$.
\end{observation}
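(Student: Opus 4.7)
The plan is to use a concrete geometric realization of $\ctwom$: take $m$ congruent round circles whose centres $(x_1,0),(x_2,0),\ldots,(x_m,0)$ lie on the $x$-axis with $x_1<x_2<\cdots<x_m$ and whose common radius is large enough that every two of them intersect. This is the natural generalization of the middle picture of Figure~\ref{fig:fig2}, and with the labelling $1,2,\ldots,m$ from left to right it is clearly isomorphic to $\ctwom$. By Proposition~\ref{pro:ortner} codes depend only on the isomorphism class together with the chosen orientations and initial points, so it suffices to verify the formula in this concrete model.

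Fix $i\in[m]$, orient the $i$-th circle counterclockwise, and start its traversal at its leftmost point $p_i$. First I would split the traversal into a southern arc (from $p_i$ eastwards through the bottom to the rightmost point) and a northern arc (back through the top to $p_i$). The two intersections of circles $i$ and $j$ share the $x$-coordinate $(x_i+x_j)/2$, which is strictly increasing in $j$ for fixed $i$. Hence along the southern arc, where the $x$-coordinate is monotonically increasing, the other circles are met in the order $1,2,\ldots,i{-}1,i{+}1,\ldots,m$, and along the northern arc, where it is decreasing, they are met in the reverse order $m,m{-}1,\ldots,i{+}1,i{-}1,\ldots,1$. This already determines the unsigned code.

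To pin down the signs, I would track the inside/outside status of the moving point with respect to each $j\neq i$. A short computation with centre-to-centre distances shows that $p_i$ lies inside circle $j$ iff $j<i$. Adopting the convention that for a counterclockwise-oriented pseudocircle the ``left'' side is its interior, each southern crossing with a circle $j<i$ is an exit and therefore contributes $\mas{j}$, while each southern crossing with a circle $j>i$ is an entry and contributes $\menos{j}$. On the northern arc the inside/outside status is flipped for every $j$, so the signs flip accordingly: $\mas{j}$ for $j>i$ and $\menos{j}$ for $j<i$. Concatenating the four resulting blocks in the order they appear yields
\[
[\mas{1}{:}\mas{i})\,(\menos{i}{:}\menos{m}]\,[\mas{m}{:}\mas{i})\,(\menos{i}{:}\menos{1}],
\]
which is exactly the claimed code. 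The only delicate point is pinning down the sign convention once and for all; once this is settled the rest is routine bookkeeping, and a convenient sanity check is the worked example with $i=3$, $m=5$ given just before Proposition~\ref{pro:ortner}.
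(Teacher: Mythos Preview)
Your argument is correct and is precisely the kind of direct verification the paper intends: the Observation is stated without proof, and checking it amounts to tracking the order and sides of the crossings in the concrete picture, exactly as you do. One small quibble: your appeal to Proposition~\ref{pro:ortner} is in the wrong direction (that proposition says equal codes imply isomorphism, not conversely), but you do not actually need it---your circle model \emph{is} the natural extension of Figure~\ref{fig:fig2} and hence is $\ctwom$ by definition, so no transfer argument is required.
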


\begin{observation}\label{obs:obscthree}
Label $\cthreem$ with the natural extension of the labelling of $\cthreefive$ in Figure~\ref{fig:fig2}. Orient all pseudocircles clockwise, and for each pseudocircle choose as initial traversal point its bottom right corner. Then the code of each $i$ in $\cthreem$ is $[\mas{1}\menos{1}{:}\mas{i}\menos{i}) (\menos{i}{:}\menos{m}][\mas{m}{:}\mas{i})$.
\end{observation}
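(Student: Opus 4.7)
The plan is a direct verification. I would first write down a coordinate realization of $\cthreem$ that extends the picture of $\cthreefive$ in Figure~\ref{fig:fig2}: each pseudocircle $i$ is an L-shaped polygonal simple closed curve, arranged so that the main bodies are nested (pseudocircle $1$ innermost, $m$ outermost), while the tall right arms sit side by side, with pseudocircle $i$'s arm at $x\in[a_i,b_i]$ and its notch-bottom at height $y=y_i$, where $a_1<b_1<a_2<b_2<\cdots<a_m<b_m$ and $y_1<y_2<\cdots<y_m$. The bottom-right corner of $i$ is then $(b_i,\beta_i)$ for some $\beta_i$.

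Next, I would identify the crossings. For $i<j$, the arm of pseudocircle $i$ lies strictly to the left of the arm of $j$ but its top extends above $y_j$, so $i$'s arm pierces the horizontal notch-bottom of $j$ exactly at $(a_i,y_j)$ and $(b_i,y_j)$; a brief case check on the six edges of each pseudocircle rules out all other crossings. I would then trace pseudocircle $i$ clockwise from $(b_i,\beta_i)$ through (a) the bottom and left sides, (b) the notch-bottom at height $y_i$, (c) the inner notch vertical $x=a_i$, (d) the top, and (e) the right vertical $x=b_i$. Sides (a) and (d) are crossing-free; side (b) meets each pseudocircle $k<i$ exactly twice, at $x=a_k$ and then $x=b_k$; side (c) meets each $j>i$ once, at $y=y_j$, in increasing order; side (e) meets each $j>i$ once, at $y=y_j$, in decreasing order.

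To read off the signs I would use the clockwise convention: interior on the right, exterior on the left. On side (b), passing $x=a_k$ moves from exterior to interior of $k$ and passing $x=b_k$ moves back out, giving the pair $\mas{k}\menos{k}$; concatenating over $k=1,\ldots,i-1$ produces exactly $[\mas{1}\menos{1}{:}\mas{i}\menos{i})$. On side (c), moving upward exits each larger pseudocircle $j$ at $y=y_j$, producing $\menos{j}$ and hence the block $(\menos{i}{:}\menos{m}]$. On side (e), moving downward re-enters each larger $j$ at $y=y_j$, producing $\mas{j}$ and hence the block $[\mas{m}{:}\mas{i})$. Concatenation yields the code claimed in the observation.

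The only subtle step is the sign analysis on side (b): one must verify that the two crossings with each smaller $k$ occur in the order "enter, then leave" rather than the reverse. This follows from the left-to-right ordering $a_k<b_k$ of the two vertical sides of $k$'s arm together with the fact that the rightward traversal of side (b) at height $y_i<y_k$'s arm means we meet the left side of $k$'s arm (entering $k$'s main body) before its right side (leaving).
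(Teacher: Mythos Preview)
The paper states this observation without proof; it is meant to be read off directly from Figure~\ref{fig:fig2}. Your coordinate-based verification is correct and simply makes explicit what the paper leaves to the reader: the enumeration of the six sides of the L-shaped curve, the identification of the two crossings with each $k<i$ on the notch-bottom and the two crossings with each $j>i$ on the vertical sides of the arm, and the sign check via the clockwise convention all go through exactly as you describe.

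One small wording issue: in your final paragraph the phrase ``at height $y_i<y_k$'s arm'' is garbled---for $k<i$ your own setup gives $y_k<y_i$, not the reverse. What you need (and what actually holds) is that $y_i$ lies in the vertical extent of $k$'s arm, i.e.\ $y_k<y_i<\text{top}$, so that both vertical sides of $k$'s arm do cross the segment at height $y_i$. This does not affect the argument.
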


We close this section with a remark on NonKrupp-packed arrangements. We say that an arrangement is {\em bad} if its pseudocircles can be labelled $1,\ldots,n$ so that for each pseudocircle $i=1,2,\ldots,n-2$, the unsigned code of $i$ in the subarrangement $\{i,i+1,\ldots,n\}$ is $(i+1)(i+1)(i+2)(i+2)\cdots n n$. Up to isomorphism, there is only one bad NonKrupp-packed arrangement of size $4$, namely the arrangement $\xx_4$ shown in Figure~\ref{fig:fig5}. This is easily checked by hand, or by an inspection of~\cite[Figure 2]{Felsner1}, which contains all arrangements of size $4$. 

{
\begin{figure}[ht!]
\centering
\scalebox{0.6}{\input{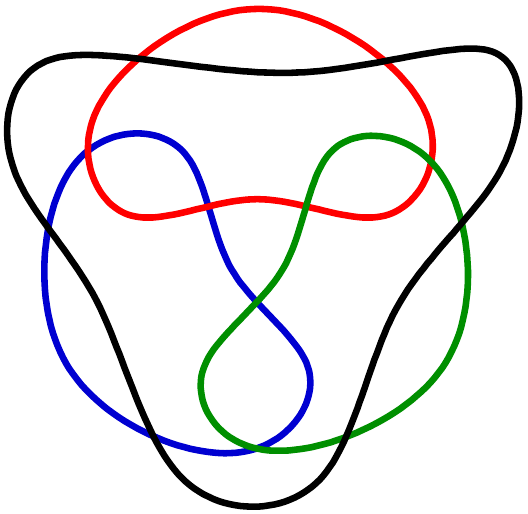_t}}
\caption{The arrangement $\xx_4$.} 
\label{fig:fig5}
\end{figure}
}

In a bad NonKrupp-packed arrangement of size $5$, all $4$-subarrangements would then be isomorphic to $\xx_4$. A routine case analysis by hand shows that no arrangement of size $5$ satisfies this property. We highlight this remark, as we use it in the proof of Lemma~\ref{lem:nonkrupp}.

\begin{observation}\label{obs:nox4packed}
There is no bad NonKrupp-packed arrangement of size $5$ (or larger).
\end{observation}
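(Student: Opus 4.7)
My plan is a proof by contradiction: suppose $\pp=\{1,\ldots,n\}$ is a bad NonKrupp-packed arrangement with $n\ge 5$. By passing to $\{1,2,3,4,5\}$, it suffices to rule out the case $n=5$. The first ingredient I would establish is that the bad condition passes to $4$-subarrangements with the induced labelling: for any $S\subseteq\{1,\ldots,n\}$ of size $\ge 3$ and any $i\in S$, the unsigned code of $i$ in $\pp|_{\{i\}\cup(S\cap\{i+1,\ldots,n\})}$ is obtained from the nested string $(i{+}1)(i{+}1)(i{+}2)(i{+}2)\cdots nn$ by deleting the entries outside $S$, and such a deletion leaves a string of the same nested shape demanded by the bad condition. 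NonKrupp-packedness clearly passes to subarrangements. By the remark preceding the observation, every $4$-subarrangement of $\pp$ is therefore isomorphic to $\xx_4$.

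Next I would invoke Proposition~\ref{pro:ortner} to turn the five induced copies of $\xx_4$ into constraints on signed intersection codes. Choose an orientation and an initial traversal point on each pseudocircle of $\pp$; the isomorphism $\{1,2,3,4\}\cong\xx_4$, read off from Figure~\ref{fig:fig5}, then pins down the signed code of pseudocircle~$1$ in $\{1,2,3,4\}$, fixing (up to the usual global freedoms) the signs of $1$'s two crossings with each of $2,3,4$. The unsigned code of $1$ in $\pp$ is the nested word $22\,33\,44\,55$, and the signs of the pair $5^{\pm}5^{\mp}$ in the signed code of $1$ are then independently prescribed by each of the three subarrangements $\{1,2,3,5\},\{1,2,4,5\},\{1,3,4,5\}\cong\xx_4$.

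The contradiction emerges from a routine but attentive case analysis: the three prescriptions of the sign pattern $5^{\pm}5^{\mp}$ along pseudocircle~$1$, one coming from each of the above three $4$-subarrangements, cannot be made mutually consistent, because the short arc of $1$ delimited by its two crossings with $5$ would have to lie simultaneously on incompatible sides of $2,3,4$. Equivalently, after realizing $\{1,2,3,4\}$ as $\xx_4$ in $\sphere^{2}$, the regions through which a candidate fifth pseudocircle could pass so as to complete each of the four relevant triples into a copy of $\xx_4$ have empty common intersection. I expect the main obstacle to be precisely this bookkeeping: Proposition~\ref{pro:ortner} reduces the problem to a finite combinatorial check, but the ambiguity in orientations and base points, together with the need to track the signed codes of each of $1,2,3,4,5$ in all five $4$-subarrangements, means one must verify a handful of configurations rather than a single canonical one before ruling out all possibilities.
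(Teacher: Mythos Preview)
Your approach is essentially the same as the paper's: both reduce to observing that every $4$-subarrangement of a bad NonKrupp-packed arrangement of size $5$ must be isomorphic to $\xx_4$, and then invoke a finite case analysis to show that no size-$5$ arrangement can have this property. The paper's own argument is equally terse (``a routine case analysis by hand shows that no arrangement of size $5$ satisfies this property''), so your sketch---while leaving the bookkeeping to be carried out, as you yourself acknowledge---matches the paper's level of detail.
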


\section{Proof of Lemma~\ref{lem:nonkrupp}}

First we identify a property shared by $\ctwom$ and $\cthreem$. To motivate this, we refer the reader to $\ctwofive$ and $\cthreefive$, shown in Figure~\ref{fig:fig2}. If we perform the relabelling $i\mapsto i-1$ to the pseudocircles in either of these arrangements,  the parts of $1,2,3,4$ in one component of $\sphere^2\setminus\{0\}$ are pairwise disjoint, and appear in a rainbow-like fashion: the unsigned code of $0$ is $12344321$. We say that an ar\-range\-ment is {\em rainbow} if its pseudocircles can be labelled $0,1,\ldots,n$ so that (I) one of the components of $\sphere^2\setminus\{0\}$ contains no intersections among the pseudocircles $1,2,\ldots,n$; and (II) the unsigned code of pseudocircle $0$ is $12\cdots n n\cdots 2 1$.


Lemma~\ref{lem:nonkrupp} follows easily from the next two statements. 

\begin{proposition}\label{pro:claima} 
For each fixed integer $n\ge 5$, every sufficiently large NonKrupp{-}packed arrangement has a rainbow subarrangement of size $n$.
\end{proposition}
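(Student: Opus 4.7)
The plan is to combine a Ramsey-type argument with Observation~\ref{obs:nox4packed} to extract a rainbow subarrangement of size $n$. First I would fix an arbitrary base pseudocircle $C_0\in\pp$. Since every triple $\{C_0,C_i,C_j\}$ is NonKrupp, both points of $C_i\cap C_j$ must lie in the same component of $\sphere^2\setminus C_0$. Coloring each pair $\{C_i,C_j\}\subseteq\pp\setminus\{C_0\}$ by this component and applying the usual two-colour Ramsey theorem $r_2(\cdot,\cdot)$, I would obtain a large monochromatic subset $\ss\subseteq\pp\setminus\{C_0\}$ in which all pairwise intersections lie on a common side of $C_0$ (say, outside).

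Next I would argue that the intervals cut out on $C_0$ by pseudocircles of $\ss$ form a laminar family: any two are either nested or disjoint. Indeed, crossing intervals on $C_0$ would force the corresponding inside arcs to cross inside $C_0$ by a Jordan-curve argument applied to the four alternating endpoints, contradicting the fact that all pairwise intersections in $\ss$ lie outside $C_0$. Then by a Dilworth/Mirsky dichotomy applied to the laminar poset, $\ss$ contains either a chain of $n-1$ nested intervals or a large antichain of pairwise disjoint intervals. In the chain case the rainbow subarrangement is immediate: the $n-1$ chain members together with $C_0$ produce the unsigned code $1\,2\cdots(n-1)(n-1)\cdots 2\,1$ along $C_0$, yielding condition~(II), while condition~(I) holds because the inside component of $\sphere^2\setminus C_0$ contains no pairwise intersection of the chain members.

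The main obstacle is the antichain case, in which $C_0$ sees many pseudocircles with pairwise disjoint intervals on itself. Here I would invoke Observation~\ref{obs:nox4packed}: if we iterate, repeatedly picking a new base from the current antichain and rerunning the two steps above, then as long as we never fall into the chain case we can extract a subarrangement whose pseudocircles admit a labelling making it bad, contradicting Observation~\ref{obs:nox4packed} (which rules this out already for size~$5$, and hence uses the hypothesis $n\ge 5$). Consequently, after at most a bounded number of iterations the dichotomy must land on the chain side, producing a rainbow subarrangement of size $n$. The delicate parts of this plan are (i) controlling the shrinking of the subsets across iterations so that the Ramsey bound remains viable at each stage, and (ii) arranging for a failure of the chain outcome to produce a labelling under which Observation~\ref{obs:nox4packed} can be applied to force a contradiction; I expect an auxiliary Ramsey layer on the cyclic orderings of the disjoint intervals on each base to be needed in order to align cyclic orders with the labels as required by the definition of a bad arrangement.
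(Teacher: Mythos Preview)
Your plan starts along reasonable lines and the chain case is fine, but the antichain branch has a genuine gap that you yourself flag as ``delicate'': the iteration does not obviously produce a \emph{bad} arrangement in the sense needed for Observation~\ref{obs:nox4packed}. The obstruction is label alignment. Badness requires a \emph{single} labelling $1,\ldots,5$ under which the unsigned code of $i$ in $\{i,\ldots,5\}$ is exactly $(i{+}1)(i{+}1)\cdots 55$ for each $i\le 3$. In your scheme the labels beyond the current base are dictated by the cyclic order of the disjoint intervals on that base, and when you pass to the next base the cyclic order there has no reason to agree with the labelling already fixed. So repeated antichains give you only ``disjoint intervals on each successive base'', not badness. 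Your suggested fix (an auxiliary Ramsey layer on cyclic orderings) is a hope rather than an argument: you would need a colouring whose monochromatic class forces the cyclic orders on all successive bases to cohere, and it is not clear what that colouring is or that it can be combined with your shrinking iteration while keeping a bounded number of steps.

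The paper sidesteps this entirely. Instead of fixing a base and iterating, it colours each triple $j<k<\ell$ by one of four colours according to the unsigned code of the \emph{smallest} pseudocircle $j$ in $\{j,k,\ell\}$, namely $kk\ell\ell$, $\ell\ell kk$, $k\ell\ell k$, or $\ell kk\ell$, and applies $3$-uniform Ramsey once. If the monochromatic colour is $kk\ell\ell$ (or, symmetrically, $\ell\ell kk$), then the disjoint-interval code holds for \emph{every} $j$ relative to all larger indices simultaneously, so the subarrangement is bad outright and Observation~\ref{obs:nox4packed} yields an immediate contradiction---no iteration and no alignment issue. If the colour is $k\ell\ell k$ (or $\ell kk\ell$), the code of the globally smallest pseudocircle $1'$ is automatically $2'3'\cdots p'p'\cdots 3'2'$, so rainbow condition~(II) is free; a single $2$-colour Ramsey on pairs (by which side of $1'$ their intersections lie) then supplies condition~(I). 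Doing the $3$-uniform step first, keyed to the smallest member of each triple, is exactly what makes the would-be ``antichain'' outcome coherent across all bases at once---this is the idea your plan is missing.
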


\begin{proposition}\label{pro:claimb}
For each fixed integer $m\ge 5$, every sufficiently large NonKrupp-packed rainbow arrangement contains a subarrangement isomorphic to $\ctwom$ or $\cthreem$.
\end{proposition}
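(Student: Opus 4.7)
The plan is to apply Ramsey's theorem to a natural two-coloring of the pairs of inner pseudocircles and argue that each monochromatic class matches the intersection code of $\ctwom$ or $\cthreem$. Let $\pp$ be a rainbow NonKrupp-packed arrangement of size $n + 1$ with outer circle $0$ and inner circles labeled $1, 2, \ldots, n$ so that the unsigned code of $0$ is $1\, 2\, \cdots\, n\, n\, \cdots\, 2\, 1$. For each pair $1 \le a < b \le n$ the two $a$-crossings split the cyclic code of $b$ into two arcs; because every triple $\{a, b, k\}$ is NonKrupp and hence nested on $b$, both $k$-crossings lie in exactly one of these two arcs. Color the pair $X$ if the two $a$-crossings on $b$ are cyclically adjacent (one arc is empty) and $Y$ otherwise.

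Choose $n \ge r_2(m-1, m-1)$ so that Ramsey's theorem yields a monochromatic subset $T \subseteq \{1, \ldots, n\}$ with $|T| = m - 1$, and relabel $T$ as $1, 2, \ldots, m-1$ in rainbow order. The claim is that $\pp|_{T \cup \{0\}}$ is isomorphic to $\ctwom$ if every pair in $T$ has color $Y$, and to $\cthreem$ if every pair has color $X$.

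In the case $Y$, the cyclic code of each $i \in T$ has the two $0$-crossings adjacent (by the rainbow property), the two $a$-crossings for $a < i$ separated so that the ``short'' arc between them contains only the $0$-crossings and the crossings with labels smaller than $a$ (forced by applying $Y$ to the pairs $(a', a)$ and the NonKrupp nesting on $\{a', a, i\}$), and the two $k$-crossings for $k > i$ placed in the long middle arc (by NonKrupp nesting on $\{a, i, k\}$). Comparison with Observation~\ref{obs:obsctwo}, combined with a consistent counterclockwise orientation, reproduces the $\ctwom$ code, and Proposition~\ref{pro:ortner} yields the isomorphism. In the case $X$, the parallel analysis using Observation~\ref{obs:obscthree} places the adjacent $\mas{a}\menos{a}$ blocks along the cyclic code of each $i$ in rainbow order and yields the $\cthreem$ code; the only competing pattern compatible with $X$ on all pairs is the ``bad'' configuration, which by Observation~\ref{obs:nox4packed} cannot exist on a set of size $\ge 5$, and this is precisely where the hypothesis $m \ge 5$ is used.

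The main technical obstacle is bootstrapping from the coarse pairwise cyclic-adjacency information to the full signed intersection code on each circle. A careful combination of the rainbow orientation, the NonKrupp-packed nesting on every triple of inner circles, and the placement of the $0$-crossings is needed to pin down each $\pm$ sign; if necessary the Ramsey coloring can be refined (for instance, by also recording which of the two arcs contains the $0$-crossings) without worsening the order-of-magnitude size bound. Observation~\ref{obs:nox4packed} plays the essential role in Case $X$ by ruling out an alternative ``bad'' ordering of the adjacent $\mas{a}\menos{a}$ blocks and thereby forcing the $\cthreem$ pattern.
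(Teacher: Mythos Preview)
Your proposal has the right high-level shape (Ramsey, then match intersection codes against Observations~\ref{obs:obsctwo} and~\ref{obs:obscthree}), but the specific colouring you choose is too coarse, and you essentially acknowledge this yourself when you call the bootstrapping step ``the main technical obstacle'' and offer to ``refine'' the colouring if needed.

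The concrete gap is twofold. First, your colour $X$/$Y$ on a pair $(a,b)$ records whether the two $a$-crossings are adjacent on $b$ \emph{in the full arrangement}. This is not stable under passing to the Ramsey subset $T$: a pair that is $Y$ because some circle $k\notin T$ sits in the short arc can become ``$X$-like'' inside $T\cup\{0\}$, so monochromatic~$Y$ in the ambient arrangement tells you very little about the codes inside $T\cup\{0\}$. Your Case~$Y$ argument (``forced by applying $Y$ to the pairs $(a',a)$ and the NonKrupp nesting on $\{a',a,i\}$'') does not actually transfer information about $a$'s code to $i$'s code in the way you need. Second, and more structurally, a pair colouring cannot see the datum the paper isolates first: for a triple $j<k<\ell$, whether the two $j$--$\ell$ crossings lie in $N_k$ or in $S_k$. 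This is inherently a triple condition, and without homogenising it you cannot reduce to the three local pictures of Figure~\ref{fig:figW} that pin down the signed codes. Your suggested fix of ``also recording which of the two arcs contains the $0$-crossings'' still lives at the pair level and does not recover this.

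For comparison, the paper proves the proposition with two $3$-uniform Ramsey steps. The first (with $n=r_3(q,q)$) forces, for every triple $j<k<\ell$ in a subarrangement $\qq$, the $j$--$\ell$ crossings to lie on the same side of $k$; by symmetry one may take this to be $N_k$. Given that, there are exactly three possible signed codes for $k$ in $\{j,k,\ell\}$, and a second Ramsey step (with $q=r_3(m,m,m)$) makes all triples fall into the same one of these three cases. Colour~1 then forces, triple by triple, the full code of each $i$ to match Observation~\ref{obs:obsctwo}, giving $\ctwom$; colours~2 and~3 (equivalent by reversing orientations) force the code of Observation~\ref{obs:obscthree}, giving $\cthreem$. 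Note that Observation~\ref{obs:nox4packed} is \emph{not} used here at all --- it is used in the proof of Proposition~\ref{pro:claima}, not in the present proposition --- so its appearance in your Case~$X$ is a sign that your colouring has not yet isolated the right information.
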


Before proving these propositions, for completeness we give the proof of Lemma~\ref{lem:nonkrupp}.

\begin{proof}[Proof of Lemma~\ref{lem:nonkrupp}, assuming Propositions~\ref{pro:claima} and~\ref{pro:claimb}] 
Obviously it suffices to prove Theorem~\ref{thm:main} for every integer $m\ge 5$. Let $m\ge 5$ be a fixed integer. By Proposition~\ref{pro:claimb}, there is an integer $n:=n(m)$ such that every NonKrupp-packed rainbow arrangement contains a subarrangement isomorphic to $\ctwom$ or $\cthreem$. By Proposition~\ref{pro:claima}, there is an integer $q:=q(n)$ such that every NonKrupp-packed arrangement has a rainbow subarrangement of size $n$. Thus every NonKrupp-packed arrangement of size at least $q$ contains a subarrangement isomorphic to $\ctwom$ or $\cthreem$.
\end{proof}

\begin{proof}[Proof of Proposition~\ref{pro:claima}] Let $n\,{\ge}\,5$ be a fixed integer. Let $p:=r_3(n,n)+1$ and $q:=r_3(p,p,p,p)$. We let $\qq=\{1,\ldots,q\}$ be a NonKrupp-packed arrangement, and show that $\qq$ contains a rainbow subarrangement of size $n$. 

Choose an arbitrary starting traversal point and orientation for each pseudocircle in $\qq$. The NonKrupp-packedness of $\qq$ implies that if $j,k,\ell$ are pseudocircles in $\qq$ such that $j<k<\ell$, then the unsigned code of $j$ in the subarrangement $\{j,k,\ell\}$ is either (i) $kk\ell\ell$; or (ii) $\ell\ell k k$; or (iii) $k\ell\ell k$; or (iv) $\ell k k \ell$. 

Regard $\qq$ as a complete $3${-}uniform hypergraph, and assign to each $3${-}edge $\{j,k,\ell\}$ with $j{<}k{<}\ell$ the colour (i), (ii), (iii), or (iv), depending on which of these scenarios holds. By Ramsey's theorem, $\qq$ has a subarrangement $\pp=\{1',2',\ldots,p'\}$, with $1\le 1' < \cdots < p' \le q$, all of whose $3$-edges are of the same colour. 

Suppose that all $3$-edges of $\pp$ are of colour (i). Then for each $i=1,\ldots,p-2$, the unsigned code of $i'$ in the subarrangement $\{i',{(i+1)}',\ldots,p'\}$ is $(i+1)'(i+1)'\cdots p' p'$. Thus $\pp$ is a bad arrangement of size $p>5$, contradicting Observation~\ref{obs:nox4packed}. Thus not all $3$-edges of $\pp$ can be of colour (i). An analogous argument shows that not all $3$-edges can be of colour (ii). 

If all $3$-edges of $\pp$ are of colour (iv) then by relabelling the pseudocircles in the reverse order we obtain an arrangement in which all $3$-edges are of colour (iii). Thus we may assume that all $3$-edges of $\pp$ are of colour (iii). In particular, the unsigned code of $1'$ in $\{1',\ldots,p'\}$ is $2' 3' \cdots p' p' \cdots 3' 2'$. Using that $p-1=r_2(n,n)$, an application of Ramsey's theorem shows that there exist $i_1',i_2',\ldots,i_n'$, with $2' \le i_1' < i_2' \cdots < i_n'\le p'$ such that one of the connected components of $\sphere^2\setminus\{1'\}$ contains no intersections among the pseudocircles $i_1',\ldots,i_n'$. The arrangement $\{1',i_1',\ldots,i_n'\}$ is rainbow. To see this, it suffices to relabel $1'$ with $0$, and $i_j'$ with $j$, for each $j=1,\ldots,n$.
\end{proof}



\begin{proof}[Proof of Proposition~\ref{pro:claimb}]
Let $m\ge5$ be a fixed integer. Let $q=r_3(m,m,m)$, and $n=r_3(q,q)$. Let $\nn_0=\{0,1,2,\ldots,n\}$ be a NonKrupp-packed rainbow arran\-gement. We show that $\nn_0$ contains a subarrangement isomorphic to either $\ctwom$ or to $\cthreem$.

Our first goal is to show that we may assume that the layout of $\nn_0$ is as shown on the right hand side of Figure~\ref{fig:figV}. To achieve this, first we note that by performing a self-homeomorphism of the sphere we may assume that the pseudocircle $0$ is the union of the Greenwich Meridian and the 180th Meridian, in particular passing through the north pole $N$ and the south pole $S$. We orient $0$ in the direction from S to N following the Greenwich Meridian, as on the left hand side of Figure~\ref{fig:figV}. 


\begin{figure}[ht!]
\centering
\scalebox{0.498}{\input{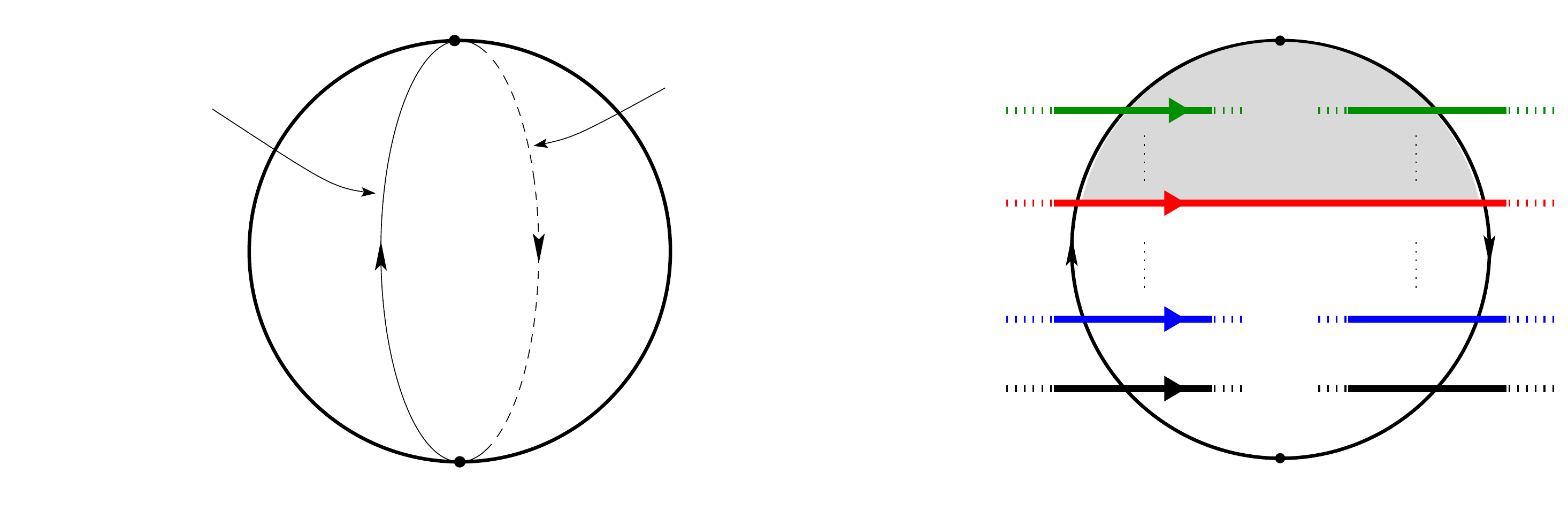_t}}
\caption{Initial setup in the proof of Proposition~\ref{pro:claimb}. On the left hand side we illustrate pseudocircle $0$. On the right hand side we illustrate the eastern hemi\-sphere, which contains all intersections among the pseudocircles $1,\ldots,n$. We illustrate the regions $N_k$ (grey) and $S_k$ (white) of pseudocircle $k$.}
\label{fig:figV}
\end{figure}

Since $\nn_0$ satisfies rainbowness Property (II), we may assume that as we traverse the Greenwich Meridian from S to N we intersect the pseudocircles $1,2,\ldots,n$ in this order, and as we traverse the 180th Meridian from N to S, we intersect them in the order $n,\ldots,2,1$. Since $\nn_0$ satisfies rainbowness Property (I), then either the eastern or the western hemisphere (say the eastern one) contains all the intersections among the pseudocircles in $\nn:=\{1,2,\ldots,n\}$. We orient all pseudocircles in $\nn$ so that as we traverse $0$ along the Greenwich Meridian from S to N the code we obtain is $\menos{1}\menos{2}\cdots\menos{n}$ (that is, these pseudocircles hit $0$ from its left hand side). Thus as we traverse the 180th Meridian from N to S the code is $\mas{n}\mas{(n-1)}\cdots \mas{1}$ . Each pseudocircle $k\in\nn$ decomposes the eastern hemisphere into two parts, a part $N_k$ that contains N, and a part $S_k$ that contains $S$. Thus the setup is as illustrated in Figure~\ref{fig:figV}.


Since $\nn$ is NonKrupp{-}packed, for any three pseudocircles $j,k,\ell$ in $\nn$, either both intersections of $j$ and $\ell$ occur in $N_k$, or they both occur in $S_k$. Regard $\nn$ as a complete $3${-}uniform hypergraph, and assign to a $3${-}edge $\{j,k,\ell\}$ with $j{<}k{<}\ell$ the colour $N$ (respectively, $S$) if the intersections of $j$ and $\ell$ lie on $N_k$ (respectively, $S_k$). Since $|\nn|=n=r_3(q,q)$, then by Ramsey's theorem $\nn$ has a subarrangement $\qq=\{1',2',\ldots,q'\}$ of size $q$, with $1'{<}2'{<}\cdots{<}q'$, all of whose $3$-edges are of the same colour. By symmetry we may assume that all $3$-edges of $\qq$ have colour $N$. To avoid unnecessary cluttered notation, we relabel the pseudocircles in $\qq$ as $1,2,\ldots,q$.

Let $j,k,\ell$ be such that $1\le j {<} k {<} \ell\le q$. Since the intersections between $j$ and $\ell$ occur in $N_k$, that is, above $k$, and $\qq$ is NonKrupp-packed, then there are only three possibilities for how $j$ and $\ell$ can intersect each other, namely as shown in Figure~\ref{fig:figW}(a), (b), and (c). Thus the code of $k$ in $\{j,k,\ell\}$ is either (1) $\mas{j}\menos{\ell}\mas{\ell}\menos{j}$, or (2) $\mas{j}\menos{j}\menos{\ell}\mas\ell$, or (3) $\menos{\ell}\mas{\ell}\mas{j}\menos{j}$, respectively.

\begin{figure}[ht!]
\centering
\scalebox{0.45}{\input{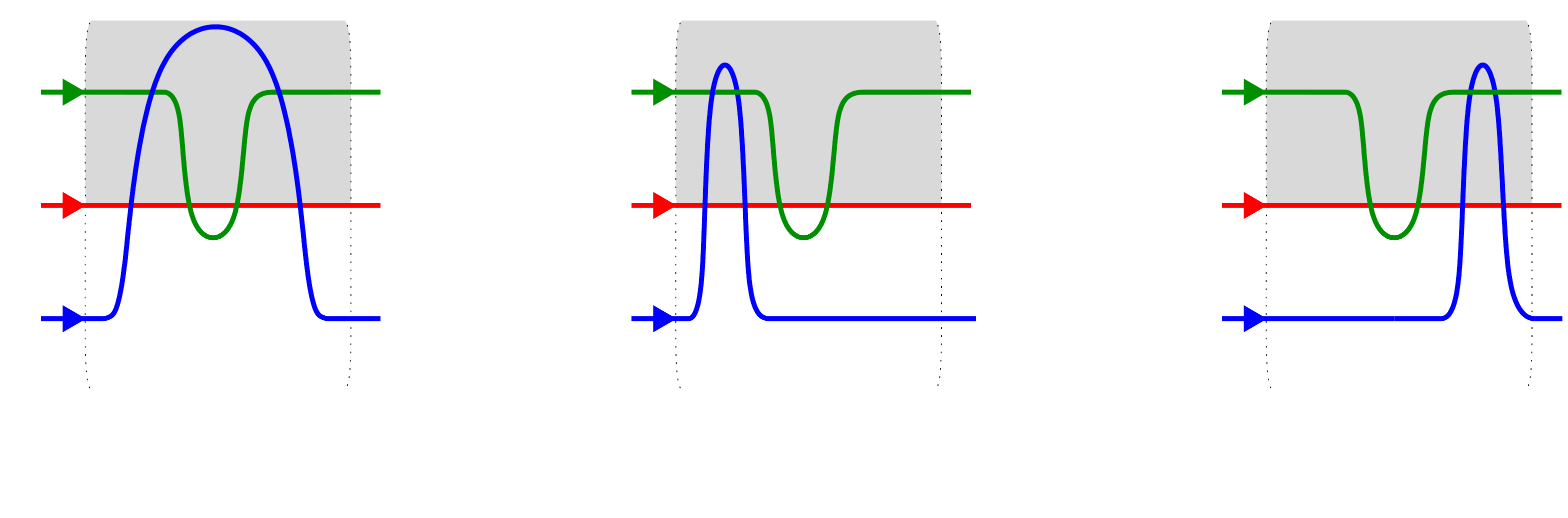_t}}
\caption{Illustration of the proof of Proposition~\ref{pro:claimb}.}
\label{fig:figW}
\end{figure}

Regard $\qq$ as a complete $3${-}uniform hypergraph. We assign to a $3${-}edge $\{j,k,\ell\}$ with $j{<}k{<}\ell$ a colour in $\{1,2,3\}$, depending, respectively, on which one of (1), (2), and (3) gives the code of $k$ in $\{j,k,\ell\}$. Since $q=r_3(m,m,m)$, by Ramsey's theorem $\qq$ has a subarrangement $\mm=\{1',2',\ldots,m'\}$, where $1'{<}2'{<}\cdots{<}m'$, all of whose $3$-edges are of the same colour. If all the $3$-edges are of colour $3$, then by changing the traversal directions of all pseudocircles and inverting the crossing-sign convention we obtain that all the $3$-edges become of colour $2$. Thus we may assume that either all the $3$-edges are of colour $1$, or they are all of colour 2. Again, to avoid unnecessary cluttered notation we relabel the pseudocircles in $\mm$ with $1,2,\ldots,m$.

Suppose first that all the $3$-edges of $\mm$ are of colour 1. Then for each $1{\le} j{<}k{<}\ell{\le} m$, the situation is as in Figure~\ref{fig:figW}(a). Thus (i) the code of $j$ in $\{j,k,\ell\}$ is $\menos{k}\menos{\ell}\mas{\ell}\mas{k}$; (ii) the code of $k$ in $\{j,k,\ell\}$ is $\mas{j}\menos{\ell}\mas{\ell}\menos{j}$; and (iii) the code of $\ell$ in $\{j,k,\ell\}$ is $\mas{j}\mas{k} \menos{k}\menos{j}$. We first analyze the code of each pseudocircle $i\in\{2,3,\ldots,m{-}1\}$. By (i), the code of $i$ in $\mm$ contains the subpermutation $(\menos{i}{:}\menos{m}][\mas{m}{:}\mas{i})$; by (iii), this code contains the subpermutation $[\mas{1}{:}\mas{i})(\menos{i}{:}\menos{1}]$; and by (ii), it contains the subpermutation $\mas{(i-1)}\menos{(i+1)}\mas{(i+1)}$ $\menos{(i-1)}$. These three conditions hold only if the code of $i$ is $[\mas{1}{:}\mas{i})(\menos{i}{:}\menos{m}][\mas{m}{:}\mas{i})$ $(\menos{i}{:}\menos{1}]$. 

We now analyze the codes of pseudocircles $1$ and $m$. By (i), the code of $1$ in $\mm$ is $(\menos{1}{:}\menos{m}][\mas{m}{:}\mas{1})= [\mas{1}{:}\mas{1}) (\menos{1}{:}\menos{m}][\mas{m}{:}\mas{1}) (\menos{1}{:}\menos{1}]$ (since $[\mas{1}{:}\mas{1})$ and $(\menos{1}{:}\menos{1}]$ are empty strings). Finally, by (iii), the code of $m$ in $\mm$ is $[\mas{1}{:}\mas{m})(\menos{m}{:}\menos{1}] =    [\mas{1}{:}\mas{m})$ $(\menos{m}{:}\menos{m}]$ $[\mas{m}{:}\mas{m})$ $ (\menos{m}{:}\menos{1}]$ (since  $(\menos{m}{:}\menos{m}]$ and $[\mas{m}{:}{m})$ are empty strings). We conclude that the code of every $i\in\mm$ is $[\mas{1}{:}\mas{i})(\menos{i}{:}\menos{m}]$ $[\mas{m}{:}\mas{i})(\menos{i}{:}\menos{1}]$. By Proposition~\ref{pro:ortner} and Observation~\ref{obs:obsctwo}, then $\mm$ is isomorphic to $\ctwom$.

Suppose finally that all the $3$-edges of $\mm$ have colour 2. Then for each $1{\le} j{<}k{<}\ell{\le} m$, the situation is as in Figure~\ref{fig:figW}(b). Thus (i) the code of $j$ in $\{j,k,\ell\}$ is $\menos{k}\menos{\ell}\mas{\ell}\mas{k}$; (ii) the code of $k$ in $\{j,k,\ell\}$ is $\mas{j}\menos{j}\menos{\ell}\mas\ell$; and (iii) the code of $\ell$ in $\{j,k,\ell\}$ is $\mas{j}\menos{j} \mas{k}\menos{k}$. We first analyze the code of each pseudocircle $i\in\{2,3,\ldots,m{-}1\}$. By (i), the code of $i$ in $\mm$ contains the subpermutation $(\menos{i}{:}\menos{m}][\mas{m}{:}\mas{i})$; by (iii), this code contains the subpermutation $[\mas{1}\menos{1}{:}\mas{i}\menos{i})$; and by (ii), it contains the subpermutation $\mas{(i-1)}\menos{(i-1)}\menos{(i+1)}$ $\mas{(i+1)}$. These three conditions hold only if the code of $i$ is $[\mas{1}\menos{1}{{:}}\mas{i}\menos{i})$ $(\menos{i}{{:}}\menos{m}][\mas{m}{{:}}\mas{i})$. 

We now analyze the codes of $1$ and $m$. By (i), the code of $1$ in $\mm$ is $(\menos{1}{{:}}\menos{m}][\mas{m}{:}\mas{1}) =   [\mas{1}\menos{1}{:}\mas{1}\menos{1}) (\menos{1}{{:}}\menos{m}][\mas{m}{:}\mas{1})$ (since $[\mas{1}\menos{1}{:}\mas{1}\menos{1})$ is an empty string). Finally, by (iii), the code of $m$ in $\mm$ is $[\mas{1}\menos{1}{:}\mas{m}\menos{m}) = [\mas{1}\menos{1}{:}\mas{m}\menos{m}) (\menos{m}{:}\menos{m}] [\mas{m} {:}\mas{m} )$ (since $(\menos{m}{:}\menos{m}]$ and $[\mas{m} {:}\mas{m})$ are empty strings). We conclude that the code of every $i\in\mm$ is $[\mas{1}\menos{1}{{:}}\mas{i}\menos{i})(\menos{i}{{:}}\menos{m}]$ $[\mas{m}{{:}}\mas{i})$. By Proposition~\ref{pro:ortner} and Observation~\ref{obs:obscthree}, then $\mm$ is isomorphic to $\cthreem$.
\end{proof}

{
\section{An open question}
To prove Theorem~\ref{thm:main} we make repeated use of Ramsey's theorem, and so an explicit bound for this theorem, derived from our proofs, would be multiply exponential. With additional effort (and considerably more space) we can save several applications of Ramsey's theorem, and show that for each fixed $m\ge 1$, every arrangement of pseudocircles of size at least $2^{2^{cm^2}}$ contains a subarrangement isomorphic to $\conem,\ctwom$, or $\cthreem$. This bound is still doubly exponential in $m$. What is the best explicit bound that can be proved for this theorem?
}


\section*{Acknowledgements}

We thank Stefan Felsner and Manfred Scheucher for making available to us their software to generate all arrangements of pseudocircles of small order; experimenting with this code was very useful at the early stages of this project. The first author is supported by Fordecyt grant 265667. The second author is partially supported by PICS07848 grant. The third author is supported by Conacyt grant 222667 and by FRC-UASLP.

\bibliographystyle{abbrv}
\bibliography{refs.bib}

\end{document}